\newcommand{\abs}[1]{\left\lvert #1 \right\rvert}  
\DeclareMathOperator{\IE}{\mathbf{E}}                     
\newcommand{\FT}{\mathcal{F}}
\newcommand{\IN}{\mathbf{N}}                     
\newcommand{\IR}{\mathbf{R}}                     
\newcommand{\mystirl}[2]{\genfrac{\{}{\}}{0pt}{}{#1}{#2}}
\DeclareMathOperator{\Var}{\mathrm{Var}}                     
\DeclareMathOperator{\Cov}{\mathrm{Cov}}                     
\DeclareMathOperator{\Prob}{\mathbf{P}}                     
\renewcommand{\phi}{\varphi{}}
\DeclareFontFamily{U}{wncy}{}
\DeclareFontShape{U}{wncy}{m}{n}{<->wncyr10}{}
\DeclareSymbolFont{mcy}{U}{wncy}{m}{n}
\DeclareMathSymbol{\Sh}{\mathord}{mcy}{"58}
\DeclareMathSymbol{\sh}{\mathord}{mcy}{"78}
\DeclareMathOperator*{\inint}{\iint\cdots\int}                     
\DeclareMathOperator*{\iints}{\iint}                     
  \DeclareRobustCommand\em
\newtheoremstyle{mythm}
  {9pt}
  {9pt}
  {\slshape}
  {0pt}
  {\bfseries}
  {.}
  { }
  {\thmname{#1} \thmnumber{ #2}\thmnote{ (#3)}}
\theoremstyle{mythm}
\newtheorem{Theorem}{Theorem}[section]
\newtheorem{Lemma}[Theorem]{Lemma}
\theoremstyle{definition} 
\newtheorem{Definition}[Theorem]{Definition}
\newtheorem{Remark}[Theorem]{Remark}
\numberwithin{equation}{section}
\begin{document}
\title{Cumulants as Iterated Integrals}
\author{Franz Lehner}
\address{Institut f\"ur Mathematische Strukturtheorie,
TU Graz,\\
Steyrergasse 30,\\
8010 Graz, Austria}
\email{lehner@finanz.math.tu-graz.ac.at}
\begin{abstract}
  A formula expressing cumulants in terms of iterated integrals
  of the distribution function is derived.
  It generalizes results of Jones and Balakrishnan
  who computed expressions for cumulants up to order 4.
\end{abstract}
\subjclass{60E05;62E10, 62E15}
\keywords{moments, cumulants, shuffle, Hoeffding's lemma, iterated integral}
\date{May 20, 2009}
\maketitle{}

\section{Introduction}
The expectation of a random variable can be computed in many ways.
One method involving only the distribution function
is obtained by careful partial integration and looks as follows:
$$
  \IE X = \int_{0}^\infty (1-F(t))\,dt - \int_{-\infty}^0 F(t)\,dt
;
$$
a similar formula holds for the  variance,
which can be written as the following double integral:
\begin{equation}
  \label{eq:CumulantShuffles:variance}
\Var(X)=2\iints_{-\infty<t_1<t_2<\infty} F(t_1)(1-F(t_2))\,dt_1\,dt_2
.
\end{equation}
Analogues of these formulas expressing
the third an fourth cumulants (skewness and kurtosis)
in terms of iterated integrals of the distribution function
were computed some time ago by
Jones and Balakrishnan~\cite{JonesBalakrishnan:2002:moments}.
The proof relied on \emph{ad hoc} partial integration,
see also \cite{BassanDenuitScarsini:1999:variability},
where similar formulas for mean differences are considered.

The aim of the present note is a generalization of
these expressions to cumulants of arbitrary order,
resulting in a formula resembling the
well-known M\"obius inversion formula,
which expresses cumulants in terms of moments.

The paper is organized as follows.
After a short introduction to cumulants in 
Section~\ref{sec:CumulantShuffles:moments}
and a review of partitions and shuffles in Section~\ref{sec:partitions}
we give two proofs of the main result.
The first one using an elementary identity for the Volterra integral operator
and Chen's shuffle formula for multiple integrals is contained in
Section~\ref{sec:iterated}. In the
concluding Section~\ref{sec:hoeffding} we give
another proof based on a formula
for multivariate cumulants due to Block and Fang.

\section{Moments and cumulants}
\label{sec:CumulantShuffles:moments}
Let $X$ be a random variable with distribution function $F(x)=P(X\leq x)=\int_{-\infty}^x dF(t)$.
Its \emph{moments} are the numbers
$$
m_n = \IE X^n = \int_{-\infty}^\infty t^n \,dF(t)
$$

Under some assumptions the sequence of moments contains
the complete information about the distribution
of $X$. It can be collected in the exponential \emph{moment generating
  function} 
(formal Fourier-Laplace transform)
$$
\FT_X(z) = \IE e^{zX}
= \sum_{n=0}^\infty \frac{m_n}{n!}\,z^n
= 1 + \frac{m_1}{1!}\,z+\frac{m_2}{2!}\,z^2+\cdots
$$
and the Taylor coefficients of the formal logarithm of the m.g.f.
$$
\log \FT_X(z) = \sum_{n=1}^\infty \frac{\kappa_n}{n!}\,z^n
$$
are called the \emph{cumulants}.
The first two are expectation and variance
\begin{align*}
\kappa_1&=m_1=\IE X\\  
\kappa_2&=m_2-m_1^2=\Var X
\end{align*}
and after rescaling the following two are 
the skewness $\kappa_3/\kappa_2^{3/2}$ and the kurtosis
$\kappa_4/\kappa_2^2$.

The cumulants carry the same information as the moments
but for many purposes in a better digestible form,
e.g., after a translation the moments behave like
$$
m_n(X+\tau) = \sum_{k=0}^n \binom{n}{k} \tau^{n-k} m_k(X)
$$
while the cumulants are
\begin{equation}
  \label{eq:CumulantShuffles:TranslatedCumulant}
  \kappa_n(X+\tau) =
  \begin{cases}
    \tau+\kappa_1(X) & n=1\\
    \kappa_n(X)      & n\geq 2
  \end{cases}
\end{equation}
For this reason the cumulants are sometimes called the \emph{semi-invariants} of $X$.
The most important property of the cumulants is the identity
$$
\kappa_n(X+Y)=\kappa_n(X)+\kappa_n(Y)
$$
if $X$ and $Y$ are independent.

\section{Partitions and Shuffles}
\label{sec:partitions}
There is also a combinatorial formula expressing the cumulants
as polynomials of the moments.
A \emph{set partition} of order $n$ is set
$$
\pi=\{B_1,B_2,\dots,B_p\}
$$
of of disjoint subsets $B_i\subseteq\{1,2,\dots,n\}$, called \emph{blocks},
whose union is $\{1,2,\dots,n\}$.
Denote $\Pi_n$ the set of all $n$-set partitions. It is a lattice under the refinement order
$$
\pi\leq\sigma\iff \text{every block of $\pi$ is contained in a block of $\sigma$},
$$
with minimal element $\hat{0}_n=\{\{1\},\{2\},\dots,\{n\}\}$
and maximal element $\hat{1}_n=\{\{1,2,\dots,n\}\}$.
Each set partition $\pi\in\Pi_n$ determines a number partition,
called its \emph{type},
$\lambda(\pi)=1^{k_1}2^{k_2}\cdots n^{k_n}\vdash n$
where $k_j$ is the number of blocks $B\in\pi$ of size $\abs{B}=j$.
For the combinatorial identities below we will employ 
the following conventions.
For a partition $\lambda = 1^{k_1}2^{k_2}\cdots n^{k_n}\vdash n$ we abbreviate
$\lambda!=1!^{k_1}2!^{k_2}\dotsm n!^{k_n}$ and
for a sequence $(a_n)_{n\in\IN}$ of numbers we denote 
$$
a_\lambda = \prod a_j^{k_j}
;
$$
similarly for a set partition $\pi$ we let
$$
a_\pi = a_{\lambda(\pi)} = \prod_{B\in\pi} a_{\abs{B}}
.
$$
Given a partition $\lambda\vdash n$,
the number of set partitions $\pi\in\Pi_n$ with $\lambda(\pi)=\lambda$
is equal to the \emph{Faa di Bruno coefficient}
\begin{equation}
  \label{eq:CumulantShuffles:FaaDiBruno}
\mystirl{n}{\lambda}=
\#\{\pi:\pi \sim \lambda\} 
= \frac{n!}%
   {1!^{k_1} 2!^{k_2} \cdots n!^{k_n} k_1!k_2!\cdots k_n!}
\end{equation}

The well known moment-cumulant formula says
\begin{equation}
  \label{eq:CumulantShuffles:Schuetzenberger}
\kappa_n = \sum_{\pi\in\Pi_n} m_\pi \,\mu(\pi,\hat{1}_n)
\end{equation}
where $\mu$ is the M\"obius function on $\Pi_n$. Its values only depend on
$\lambda(\pi)=1^{k_1}2^{k_2}\cdots n^{k_n}$,
namely 
$$
\mu(\pi,\hat{1}_n)=\mu_{\lambda(\pi)}=\prod_{j=1}^n ((-1)^j(j-1)!)^{k_j}
$$
Using the Faa Di Bruno coefficients~\eqref{eq:CumulantShuffles:FaaDiBruno}
the moment-cumulant formula can be condensed to
\begin{equation}
  \label{eq:CumulantShuffles:SchuetzenbergerFaaDiBruno}
\kappa_n = \sum_{\lambda\vdash n} 
\mystirl{n}{\lambda}
\,
m_\lambda \,\mu_\lambda
\end{equation}

\begin{Definition}
  \label{def:CumulantShuffles:shuffle}
Let $a=(a_1,a_2,\dots, a_m)$ and  $b=(b_1,b_2,\dots, b_m)$ be two finite
sequences.
A \emph{shuffle} of $a$ and $b$ is a pair of order preserving injective maps
$\phi:a\to \{1,\dots,m+n\}$ and $\psi:b\to \{1,\dots,m+n\}$ 
with disjoint images. 
When the two sequences are thought of as two decks of cards,
this corresponds to putting the two decks together in such a way
that the relative order in the individual decks is preserved,
the card $a_i$ (resp.\ $b_j$) being put in position $\phi(a_i)$ (resp.\
$\psi(b_j)$). The result of the shuffle is the sequence, where each $i$
is replaced by the symbol $\phi^{-1}(i)$ (or $\psi^{-1}(i)$).
Denote $\Sh(m,n)$ the set of shuffles of the sequences $(1,2,\dots,m)$ and
$(m+1,m+2,\dots,m+n)$.
Shuffles of multiple sequences are defined accordingly
and for a partition $\lambda\vdash n$ denote $\Sh(\lambda)$ the set of shuffles
of disjoint sequences with cardinalities given by $\lambda$.
\end{Definition}

Each shuffle is uniquely determined by the subsets the individual sequences are mapped
to and thus the number of shuffles is equal to the number of ways of picking
these subsets, i.e., the multinomial coefficient
$$
\#\Sh(\lambda) = \binom{n}{\lambda}
$$
On the other hand
each shuffle $\tau\in\Sh(\lambda)$ determines a partition $\pi$ of $\{1,2,\dots,n\}$
of type $\lambda$. However different shuffles may determine the same
partition $\pi$, if $\lambda$ contains entries of the same size.
Therefore we have the identity
$$
\#\Sh(\lambda)=k_1!k_2!\cdots k_n! \mystirl{n}{\lambda}
$$
which will lead to an interesting cancellation later.

\section{Iterated Integrals}
\label{sec:iterated}

\begin{Definition}
  The \emph{Volterra operator} is the integral operator
  $$
  Vf(x) = \int_{-\infty}^x f(t)\,dt
  $$
  defined for suitable integrable functions $f:\IR\to\IR$;
  its powers are defined recursively by
  $$
  V^n f(x) = \int_{-\infty}^x (V^{n-1}f)(t)\,dt
  $$
\end{Definition}
It was first observed by Chen~\cite{Chen:1957:integration} and Ree
\cite{Ree:1958:lie}
that the recursively defined iterated integrals
$$
\alpha_{i_1,i_2,\dots,i_n}(t) = \int_a^t \alpha_{i_1,i_2,\dots,i_{n-1}}(u)\,d\alpha_{i_n}(u)
$$
which can be written as
$$
\inint_{a<t_1<t_2<\cdots t_n} d\alpha_{i_1}(t_1)\,d\alpha_{i_2}(t_2)\cdots d\alpha_{i_n}(t_n)
$$
satisfy the shuffle relations
$$
\alpha_{i_1,i_2,\dots,i_m}(t) \, \alpha_{j_1,j_2,\dots,j_n}(t) 
= \sum_{\sigma\in\Sh(\{i_1,\dots,i_m\},\{j_1,\dots,j_n\})} \alpha_\sigma(t)
$$
For the Volterra operator this means that for example
$$
V^mf(x) \, V^nf(x) = \sum_{\tau\in \Sh(m,n)} \inint_{t_1<\cdots<t_{m+n}}
 \, f_\tau(t_1,\dots,t_{m+n})\,dt_1\,dt_2\cdots dt_{m+n}
;
$$
where $f_\tau(t_1,\dots,t_{m+n})=f(t_{\phi(1)})\,f(t_{\psi(1)})$
with $\phi$ and $\psi$ as in Definition~\ref{def:CumulantShuffles:shuffle}.

Note that if $f(t)$ is a probability density function, then
the corresponding distribution function is given by
$$
F(x) = Vf(x)
$$
We denote $F^{[n]}(x)=VF^{[n-1]}(x) = V^{n+1}f(x)$
where $F^{[0]}(x)=F(x)=\int_{-\infty}^x dF(t)$.
This notation slightly differs from \cite{BassanDenuitScarsini:1999:variability}.
Then one can easily show by induction that these integrals are truncated moments.
\begin{Lemma}
  $$
  F^{[n]}(\tau) = \frac{1}{n!}
             \int_{-\infty}^\tau 
              (\tau-t)^n\,dF(t)
  = \frac{1}{n!}\IE (\tau-X)_+^n
  $$
\end{Lemma}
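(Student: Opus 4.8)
The plan is to prove the identity by induction on $n$, exactly as the text suggests, treating the two displayed equalities separately. The second equality $\frac{1}{n!}\int_{-\infty}^\tau(\tau-t)^n\,dF(t)=\frac{1}{n!}\IE(\tau-X)_+^n$ is immediate from the definition of expectation, since $(\tau-t)^n$ agrees with $(\tau-t)_+^n$ on $(-\infty,\tau]$ and the latter vanishes on $(\tau,\infty)$; so the real content is the first equality $F^{[n]}(\tau)=\frac{1}{n!}\int_{-\infty}^\tau(\tau-t)^n\,dF(t)$.

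For the base case $n=0$ we have $F^{[0]}(\tau)=F(\tau)=\int_{-\infty}^\tau dF(t)$, which matches $\frac{1}{0!}\int_{-\infty}^\tau(\tau-t)^0\,dF(t)$. For the inductive step, assume the formula holds for $n-1$ and compute, using $F^{[n]}=VF^{[n-1]}$,
\begin{equation*}
F^{[n]}(\tau)=\int_{-\infty}^\tau F^{[n-1]}(u)\,du
=\int_{-\infty}^\tau\frac{1}{(n-1)!}\int_{-\infty}^u(u-t)^{n-1}\,dF(t)\,du.
\end{equation*}
The plan is then to swap the order of integration via Fubini (the integrand is nonnegative, so this is justified), obtaining $\frac{1}{(n-1)!}\int_{-\infty}^\tau\Bigl(\int_t^\tau(u-t)^{n-1}\,du\Bigr)dF(t)$, and evaluate the inner integral as $\frac{(\tau-t)^n}{n}$, which produces exactly $\frac{1}{n!}\int_{-\infty}^\tau(\tau-t)^n\,dF(t)$.

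The only genuine subtlety — and the step I would flag as the main obstacle, though it is mild — is justifying the interchange of integration rigorously when $F$ is an arbitrary distribution function rather than one arising from a density; here one reads the inner integrals as Lebesgue–Stieltjes integrals and invokes Tonelli's theorem on the region $\{(u,t):-\infty<t<u<\tau\}$ with respect to the product of Lebesgue measure in $u$ and $dF$ in $t$, the nonnegativity of $(u-t)^{n-1}$ making integrability automatic (the mass is bounded by $\frac{(\tau-t)^{n}}{n}$, integrable against $dF$ on the finite-mass set $(-\infty,\tau]$). One should also note the implicit finiteness assumption: the formula is an identity in $[0,\infty]$ with no extra hypotheses, and is finite precisely when $\IE(\tau-X)_+^n<\infty$, i.e. when the relevant lower moment of $X$ exists. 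With these points addressed the induction closes and both displayed equalities follow.
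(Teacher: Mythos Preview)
Your proof is correct and follows exactly the approach the paper indicates: the paper does not write out a proof but simply remarks that ``one can easily show by induction that these integrals are truncated moments,'' and your argument is precisely that induction, with the Fubini/Tonelli step made explicit.
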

Before proceeding further,
assume for the moment that the support of $dF$ is bounded and that $\tau$ 
is an upper bound.
Then $Y=X-\tau$ has moments
$$
y_n = \IE (X-\tau)^n = (-1)^n n! F^{[n]}(\tau)
=
\sum_{k=0}^n \binom{n}{k}
(-\tau)^{n-k} m_k
$$
and because of  \eqref{eq:CumulantShuffles:TranslatedCumulant}
the cumulants are
$$
\kappa_n(X)=
\begin{cases}
  \kappa_1(Y)+\tau & n=1\\
  \kappa_n(Y) & n\geq 2
\end{cases}
.
$$
Thus we can express the cumulants of order $n\geq2$ 
by a moment-cumulant type formula in terms of $y_n=(-1)^nn! V^nF$:
\begin{equation}
\label{eq:CumulantShuffles:kappan=sumypi}
  \kappa_n(X)
  = \sum_{\pi\in\Pi_n} y_\pi\, \mu(\pi,\hat{1}_n)
\end{equation}
After some shuffling (in the literal sense!)
this simplifies to the following formula.
\begin{Theorem}
\label{thm:CumulantShuffles:univariate}
\begin{equation}
\label{eq:CumulantShuffles:univariate}
  \kappa_n
  = (-1)^nn! \inint_{t_1<t_2<\dots< t_n} 
  \sum_{\pi\in\Pi_n} F_\pi(t_1,t_2,\dots,t_n)\,
  \mu(\pi,\hat{1}_n)\, dt_1\,dt_2\cdots dt_n
\end{equation}
where $F_\pi(t_1,t_2,\dots,t_n)=\prod_{B\in\pi} F(t_{\alpha(B)})$ 
where $\alpha(B)$ denotes the first (i.e., smallest) element of a block $B$.
\end{Theorem}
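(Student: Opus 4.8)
The plan is to feed the truncated-moment representation of Section~\ref{sec:iterated} into the moment--cumulant expansion~\eqref{eq:CumulantShuffles:kappan=sumypi} (valid for $n\geq2$, with $\tau$ an upper bound of the support of $dF$), to multiply out the resulting product of Volterra integrals by Chen's shuffle formula, and then to watch the combinatorial coefficients collapse. Since $y_m=(-1)^m m!\,V^mF(\tau)$ (using $F^{[m]}=V^mF$ and $V^0F=F$), for any $\pi\in\Pi_n$
\[
  y_\pi=\prod_{B\in\pi}y_{\abs{B}}
       =(-1)^{n}\Bigl(\prod_{B\in\pi}\abs{B}!\Bigr)\prod_{B\in\pi}V^{\abs{B}}F(\tau)
       =(-1)^{n}\,\lambda(\pi)!\prod_{B\in\pi}V^{\abs{B}}F(\tau) ,
\]
so \eqref{eq:CumulantShuffles:kappan=sumypi} becomes $\kappa_n=(-1)^{n}\sum_{\pi\in\Pi_n}\lambda(\pi)!\,\mu(\pi,\hat1_n)\prod_{B\in\pi}V^{\abs{B}}F(\tau)$.

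Next I would write each Volterra power as an iterated integral: an easy induction gives $V^bF(\tau)=\inint_{s_1<\dots<s_b<\tau}F(s_1)\,ds_1\cdots ds_b$, which in Chen's notation (with $d\alpha_1=F\,dt$ and $d\alpha_0=dt$) is the word $\alpha_{1,0,\dots,0}(\tau)$ whose \emph{only} ``$F$''-entry occupies the first slot. Applying the iterated shuffle relation to the product over the blocks $B_1,\dots,B_p$ of $\pi$ yields
\[
  \prod_{B\in\pi}V^{\abs{B}}F(\tau)
  =\sum_{\sigma\in\Sh(\lambda(\pi))}\ \inint_{t_1<\dots<t_n<\tau}F_{\pi_\sigma}(t_1,\dots,t_n)\,dt_1\cdots dt_n ,
\]
where $\pi_\sigma\in\Pi_n$ is the set partition whose blocks are the position-sets occupied by the $p$ decks. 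The key point is that, because in each factor the $F$ sits at the \emph{smallest} position of its deck, the shuffled integrand is exactly $\prod_{B\in\pi_\sigma}F(t_{\alpha(B)})=F_{\pi_\sigma}(t)$ in the sense of the theorem.

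Now comes the bookkeeping. The product $\prod_{B\in\pi}V^{\abs{B}}F(\tau)$ depends on $\pi$ only through its type $\lambda=\lambda(\pi)$, and as $\sigma$ ranges over $\Sh(\lambda)$ the partition $\pi_\sigma$ ranges over all set partitions of type $\lambda$, each attained exactly $k_1!k_2!\cdots k_n!$ times --- two shuffles give the same $\pi_\sigma$ precisely when they differ by a permutation of equal-size decks, which is the identity $\#\Sh(\lambda)=k_1!\cdots k_n!\,\mystirl{n}{\lambda}$ from Section~\ref{sec:partitions}. Hence $\sum_{\sigma\in\Sh(\lambda)}F_{\pi_\sigma}(t)=k_1!\cdots k_n!\sum_{\pi:\lambda(\pi)=\lambda}F_\pi(t)$. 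Grouping the outer sum over $\pi$ by type --- there being $\mystirl{n}{\lambda}$ partitions of each type, with $\lambda(\pi)!$ and $\mu(\pi,\hat1_n)=\mu_\lambda$ functions of the type --- I obtain
\[
  \kappa_n=(-1)^{n}\sum_{\lambda\vdash n}\mystirl{n}{\lambda}\,\lambda!\,k_1!\cdots k_n!\;\mu_\lambda
           \inint_{t_1<\dots<t_n<\tau}\sum_{\pi:\lambda(\pi)=\lambda}F_\pi(t)\,dt ,
\]
and the Faa di Bruno identity~\eqref{eq:CumulantShuffles:FaaDiBruno} collapses the weight via $\mystirl{n}{\lambda}\,\lambda!\,k_1!\cdots k_n!=n!$. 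The prefactor therefore equals $(-1)^{n}n!$ regardless of $\lambda$, the two inner sums recombine into $\sum_{\pi\in\Pi_n}F_\pi(t)\,\mu(\pi,\hat1_n)$, and \eqref{eq:CumulantShuffles:univariate} follows --- still with upper limit $\tau$ on the integrals.

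It remains to replace that upper limit by $+\infty$ and to dispose of the boundedness assumption. Here I would check that the integrand $\sum_{\pi\in\Pi_n}F_\pi(t)\,\mu(\pi,\hat1_n)$ vanishes whenever $F(t_n)=1$ (it also vanishes whenever $F(t_1)=0$, trivially, since the block of $\pi$ containing $1$ contributes the factor $F(t_1)$): splitting $\Pi_n$ according to whether $\{n\}$ is a singleton block and reducing to $\Pi_{n-1}$, the partitions with $\{n\}$ a block contribute $-\sum_{\rho\in\Pi_{n-1}}\abs{\rho}\,F_\rho\,\mu(\rho,\hat1_{n-1})$ --- using $\mu(\rho\cup\{\{n\}\},\hat1_n)=-\abs{\rho}\,\mu(\rho,\hat1_{n-1})$ --- while adjoining $n$ to one of the blocks of $\rho$, which is possible in $\abs{\rho}$ ways and changes neither the number of blocks nor $F_\rho$, contributes the opposite; the two cancel. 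Consequently, for bounded support the integrand is supported on a bounded region independent of $\tau$, so the $\tau$-truncated integral already equals $\inint_{-\infty<t_1<\dots<t_n<\infty}$; a routine truncation and dominated-convergence argument (replace $X$ by $(X\wedge M)\vee(-M)$ and let $M\to\infty$) then removes the boundedness hypothesis. The main obstacle is the combinatorial collapse of the third step --- simultaneously tracking which partition a shuffle produces and the placement of the $F$'s at the block minima, and seeing $k_1!\cdots k_n!$ conspire with $\lambda!$ and $\mystirl{n}{\lambda}$ to produce exactly $n!$; the vanishing of the integrand at $F(t_n)=1$ is shorter, but it is what makes the unbounded integral in the statement legitimate.
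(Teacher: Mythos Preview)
Your argument is correct and follows essentially the same route as the paper's first proof: the moment--cumulant formula for the shifted variable, Chen's shuffle relation applied to the product of Volterra iterates, and the collapse $\mystirl{n}{\lambda}\,\lambda!\,k_1!\cdots k_n!=n!$ via the Faa di Bruno identity. Your treatment of the passage $\tau\to\infty$ is in fact more careful than the paper's one-line dismissal; the factorization you use to show the integrand vanishes when $F(t_n)=1$ is precisely the content of the Remark following the proof.
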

Note that the
higher order cumulants do not depend on $\tau$ and thus
the formula also holds if the support of $X$ is unbounded.

\begin{Remark}

  It may be hoped that this formula provides some insight to Rota's
  \emph{problem of the cumulants} \cite{Rota:2001:twelve},
  namely to find a collection of inequalities which are necessary
  and sufficient
  for a number sequence to be the cumulant sequence of some probability
  distribution.
  One advantage of formula \eqref{eq:CumulantShuffles:univariate}
  is the fact that all terms appearing in the sum 
  are either nonnegative or nonpositive, regardless
  which particular probability distribution is considered,
  in contrast to the moments in Sch\"utzenberger's formula
  \eqref{eq:CumulantShuffles:Schuetzenberger}.
\end{Remark}

\begin{proof}
  Assume first that the support of $X$ is bounded.
  Starting from \eqref{eq:CumulantShuffles:kappan=sumypi},
  or rather  \eqref{eq:CumulantShuffles:SchuetzenbergerFaaDiBruno},
  we have
  \begin{align*}
    \kappa_n
    &= \sum_{\lambda\vdash n} \mystirl{n}{\lambda}\,y_\lambda \,\mu_\lambda \\
    &= (-1)^n \sum_{\lambda\vdash n} \mystirl{n}{\lambda} \lambda!
    \,F^{[\lambda]}(\tau) \,\mu_\lambda \\    
\intertext{and by Chen's lemma this is equal to} 
    &= (-1)^n \sum_{\lambda\vdash n} \mystirl{n}{\lambda} \lambda!
    \sum_{\sigma\in\Sh(\lambda)}
    \inint_{-\infty<t_1<t_2<\cdots<t_n<\tau}F_\sigma(t_1,\dots,t_n)\,dt_1\,dt_2\cdots dt_n
     \,\mu_\lambda . \\    
\intertext{Since $F_\sigma$ only depends on the partition determined by
  $\sigma$, we can collect equal terms to get}
    &= (-1)^n \sum_{\lambda\vdash n} \mystirl{n}{\lambda} \lambda!k_1!k_2!\dots k_n!
    \sum_{\pi\sim\lambda} \inint_{-\infty<t_1<t_2<\cdots<t_n<\tau}F_\pi(t_1,\dots,t_n)\,dt_1\,dt_2\cdots dt_n
     \,\mu_\lambda \\
    &= (-1)^n n!\sum_{\pi\in\Pi_n} \inint_{-\infty<t_1<t_2<\cdots<t_n<\tau}F_\pi(t_1,\dots,t_n)\,dt_1\,dt_2\cdots dt_n
     \,\mu(\pi,\hat{1}_n)
    .
  \end{align*}
  Since the final formula does not depend on the chosen integration bound
  $\tau$, we may let it go to infinity and the formula holds for arbitrary
  distribution functions.
\end{proof}
\begin{Remark}
  Using the recursive structure of the partition lattice, we can partially factorize
  the integrand of \eqref{eq:CumulantShuffles:univariate}.
  Each partition $\pi\in\Pi_n$ can be constructed from a unique partition $\pi'\in\Pi_{n-1}$
  by either adjoining $\{n\}$ as a separate block or by joining $n$ to one of the blocks
  of $\pi'$.
  Assume that $\pi'$ has $k$ blocks.
  In the first case the number of blocks is increased  to $k+1$
  and the M\"obius function, which depends on the number of blocks,
  becomes $\mu(\pi,\hat{1}_n)=-k\mu(\pi',\hat{1}_{n-1})$.
  The integrand changes simply to $F_\pi(t)=F_{\pi'}(t)\,F(t_n)$.

  In the second case the number of blocks remains the same and also the
  M\"obius function and integrand stays the same. However there are $k$ possible ways
  to join $n$ to a block of $\pi'$.
  Thus we can write
  \begin{align*}
    \kappa_n(X)
    &= (-1)^n\,n!
       \inint_{t_1<t_2<\dots< t_n} 
       \sum_{k=1}^{n-1}
       \sum_{\pi'\in\Pi_{n-1,k}} (-k F_{\pi'}(t_1,t_2,\dots,t_{n-1})\,F(t_n) 
\\& \hskip15em\hfill
+ k F_{\pi'}(t_1,t_2,\dots,t_{n-1})
\,
    \mu(\pi',\hat{1}_{n-1})\, dt_1\,dt_2\cdots dt_n \\
    &= (-1)^n\,n!
       \inint_{t_1<t_2<\dots< t_n} 
       \sum_{\pi\in\Pi_{n-1}} \abs{\pi} \, F_{\pi}(t_1,t_2,\dots,t_{n-1})\,(1-F(t_n))\,
    \mu(\pi,\hat{1}_{n-1})\, dt_1\,dt_2\cdots dt_n \\
  \end{align*}
  
  The number of integration variables can still be reduced by two
  as discussed in \cite{Jones:2004:expressions}.
  For this purpose we introduce the so-called
  \emph{mean redidual life functions}
  of Barlow and Proschan~\cite{BarlowProschan:1965:reliability},
  namely
  \begin{align*}
    R(y) &= \IE(X|X>y) - y = \frac{\int_y^\infty (1-F(t))\,dt}{1-F(y)}\\
    P(y) &= y - \IE(X|X<y) = \frac{\int_{-\infty}^y F(t)\,dt}{F(y)}
  \end{align*}
  and obtain
  $$
  \kappa_n(X) =
    (-1)^n\,n!
    \inint_{t_2<\dots< t_{n-1}} 
    \sum_{\pi\in\Pi_{n-1}} \abs{\pi} \, P(t_2)\,F_{\pi}(t_2,t_2,\dots,t_{n-1})\,(1-F(t_{n-1}))R(t_{n-1})\,
       \mu(\pi,\hat{1}_{n-1})\, dt_2\,dt_3\cdots dt_{n-1} \\
  $$
  because $F_\pi(t_1,t_2,\dots,t_{n-1})$ always contains $F(t_1)$ as a factor.

\end{Remark}

\section{H\"offding's formula and multivariate cumulants}
\label{sec:hoeffding}
In this section we give another proof of Theorem~\ref{thm:CumulantShuffles:univariate}
using an extension of H\"offding's formula due to Block and 
Fang~\cite{BlockFang:1988:multivariate}. 
Multivariate cumulants are defined as coefficients of multivariate Fourier transforms,
namely
$$
\kappa_n(X_1,X_2,\dots,X_n)=
  =
  \left.
    \frac{\partial}{\partial z_1} \dotsm \frac{\partial}{\partial z_n}
    \log \IE e^{z_1 X_1 + \dots + z_n X_n}
  \right|_{z_1=\cdots=z_n=0}
$$
The univariate cumulants in Section~\ref{sec:CumulantShuffles:moments} correspond to the 
case $X_1=X_2=\cdots=X_n=X$.
As an example,
the second cumulant is the covariance
$$
\kappa_2(X_1,X_2) = \Cov(X_1,X_2) = \IE X_1X_2-\IE X_1 \IE X_2
.
$$
In general the multivariate analogue of \eqref{eq:CumulantShuffles:Schuetzenberger}
is
\begin{equation}
  \label{eq:CumulantShuffles:SchuetzenbergerMultivariate}
  \kappa_n(X_1,X_2,\dots,X_n) = \sum_{\pi\in\Pi_n} m_\pi(X_1,X_2,\dots,X_n) \,\mu(\pi,\hat{1}_n)
\end{equation}
where for a partition $\pi\in\Pi_n$ we denote
$$
m_\pi(X_1,X_2,\dots,X_n) = \prod_{B\in\pi} \IE\prod_{i\in B} X_i
.
$$

H\"offding's formula \cite{Hoeffding:1940:masstabinvariante,Lehmann:1966:concepts}
gives an alternative expression for the covariance in terms
of distribution functions:
$$
\kappa_2(X_1,X_2) = \iint (F(t_1,t_2)-F_1(t_1)\,F_2(t_2))\,dt_1\,dt_2
$$
where $F(t_1,t_2)=\Prob(X_1\leq t_1 \wedge X_2\leq t_2)$ is the joint distribution function
and $F_1$ and $F_2$ are the marginal distribution functions of $X_1$ and $X_2$.
From this it is easy to derive \eqref{eq:CumulantShuffles:variance}.

We shall use an extension of H\"offding's formula
to cumulants of all orders, due to Block and 
Fang~\cite{BlockFang:1988:multivariate}
from which Theorem~\ref{thm:CumulantShuffles:univariate} 
follows as a corollary.
For the reader's convenience, we provide a short proof of Block and Fang's
result here. 
For that purpose we introduce the following notations.

Let $X_1,X_2,\dots,X_n$ be random variables and denote by 
$F(t_1,t_2,\dots,t_n)=\Prob(X_1\leq t_1,X_2\leq t_2,\dots,X_n\leq t_n)$
their joint distribution function. More generally,
 for a subset $I\subseteq \{1,2,\dots,n\}$
denote 
$$
F_I((t_i)_{i\in I}) = \Prob(X_i\leq t_i:i\in I)
.
$$
Define iterated integrals by the recursion
\begin{equation}
  \label{eq:CumulantShuffles:MultivariateIteratedRecursion}
\begin{aligned}
  F^{[0,0,\dots,0]}(x_1,x_2,\dots,x_n)&=F(x_1,x_2,\dots,x_n) \\
  F^{[k_1,k_2,\dots,k_i+1,\dots,k_n]}(x_1,x_2,\dots,x_n)&=
  \int_{-\infty}^{x_i} F^{[k_1,k_2,\dots,k_i,\dots,k_n]}(x_1,x_2,\dots,t_i,\dots x_n)\,dt_i
\end{aligned}
\end{equation}
Then one can show by induction that
\begin{equation}
  \label{eq:CumulantShuffles:Fk=Ex-Xp}
F^{[k_1,k_2,\dots,k_n]}(x_1,x_2,\dots,x_n)=
\IE \frac{(x_1-X_1)_+^{k_1}}{k_1!}
    \frac{(x_2-X_2)_+^{k_2}}{k_2!}
    \dotsm
    \frac{(x_n-X_n)_+^{k_n}}{k_n!}
\end{equation}

\begin{Theorem}[{\cite{BlockFang:1988:multivariate}}]
  For any $n\geq2$ the $n$-th multivariate cumulant is given by
  $$
  \kappa_n(X_1,X_2,\dots,X_n) = (-1)^n \inint_{-\infty}^\infty \sum_{\pi\in\Pi_n} F_\pi(t_1,t_2,\dots,t_n)\,\mu(\pi,\hat{1}_n) \,dt_1\,dt_2\dots dt_n
  $$
  where for any partition $\pi\in\Pi_n$ we denote
  $$
  F_\pi(t_1,t_2,\dots,t_n) = \prod_{B\in \pi} F_B(t_i:i\in B)
  $$
\end{Theorem}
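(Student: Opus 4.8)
The plan is to mimic the derivation of Theorem~\ref{thm:CumulantShuffles:univariate}: start from the multivariate moment–cumulant formula \eqref{eq:CumulantShuffles:SchuetzenbergerMultivariate} and rewrite each mixed moment $m_\pi$ as an iterated integral of the joint distribution functions $F_B$ by means of the truncated–moment identity \eqref{eq:CumulantShuffles:Fk=Ex-Xp}. As in the univariate case I would first assume that every $X_i$ has support bounded above and fix a real number $\tau$ that is simultaneously an (essential) upper bound for $X_1,\dots,X_n$; the boundedness hypothesis is then removed at the very end by letting $\tau\to\infty$, using that the left–hand side does not depend on $\tau$ for $n\ge2$.

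The computational core is the following. Applying \eqref{eq:CumulantShuffles:Fk=Ex-Xp} to the subfamily $(X_i)_{i\in B}$ with all exponents equal to $1$ and evaluating at $x_i=\tau$ for $i\in B$, the positive parts disappear because $\tau\ge X_i$ almost surely, so iterating the recursion \eqref{eq:CumulantShuffles:MultivariateIteratedRecursion} gives
\[
  \int_{-\infty}^{\tau}\!\cdots\!\int_{-\infty}^{\tau} F_B\bigl((t_i)_{i\in B}\bigr)\,\prod_{i\in B}dt_i = \IE\prod_{i\in B}(\tau-X_i).
\]
Taking the product of these identities over the blocks $B\in\pi$ and merging the product of integrals into a single integral (Fubini applies trivially, the integrands being nonnegative and bounded on a bounded box) yields
\[
  \int_{-\infty}^{\tau}\!\cdots\!\int_{-\infty}^{\tau} F_\pi(t_1,\dots,t_n)\,dt_1\cdots dt_n = \prod_{B\in\pi}\IE\prod_{i\in B}(\tau-X_i) = m_\pi(\tau-X_1,\dots,\tau-X_n).
\]
Multiplying by $\mu(\pi,\hat1_n)$, summing over $\pi\in\Pi_n$ and invoking \eqref{eq:CumulantShuffles:SchuetzenbergerMultivariate} for the random variables $\tau-X_i$ turns the right–hand side into $\kappa_n(\tau-X_1,\dots,\tau-X_n)$. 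Since $n\ge2$, cumulants are insensitive to the translation by $\tau$, and multilinearity gives $\kappa_n(\tau-X_1,\dots,\tau-X_n)=\kappa_n(-X_1,\dots,-X_n)=(-1)^n\kappa_n(X_1,\dots,X_n)$. Rearranging produces the asserted formula with each integral extended over $(-\infty,\tau]$, and letting $\tau\to\infty$ finishes the argument.

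The only step that is more than bookkeeping is this last passage to the limit: each individual integral $\int F_\pi$ diverges as $\tau\to\infty$, and it is only their $\mu$–weighted combination that stabilizes. I would handle this exactly as in Theorem~\ref{thm:CumulantShuffles:univariate}, namely by observing that the whole expression is already independent of $\tau$ before the limit is taken, so that writing $\inint_{-\infty}^{\infty}$ is merely shorthand for this common value. The induction underlying \eqref{eq:CumulantShuffles:Fk=Ex-Xp} and the multivariate Möbius inversion \eqref{eq:CumulantShuffles:SchuetzenbergerMultivariate} are taken as granted from the preceding discussion.
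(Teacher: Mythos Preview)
Your proposal is correct and follows essentially the same route as the paper: bound the variables from above, use \eqref{eq:CumulantShuffles:Fk=Ex-Xp} to identify the iterated integrals of $F_B$ with mixed moments of the translated variables, apply the multivariate moment--cumulant formula \eqref{eq:CumulantShuffles:SchuetzenbergerMultivariate}, invoke translation semi-invariance and multilinearity for $n\ge2$, and finally observe that the result is independent of the chosen bound. The only cosmetic difference is that the paper allows a separate upper bound $x_i$ for each $X_i$ rather than a single common $\tau$, which changes nothing in the argument.
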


\begin{proof}
  Assume first that the random variables $X_i$ are bounded from above
  and choose upper bounds $x_i$.
  Then the subscripts in \eqref{eq:CumulantShuffles:Fk=Ex-Xp} disappear and
  we have
  $$
  F^{[1,1,\dots,1]}(x_1,x_2,\dots,x_n)=
  \IE (x_1-X_1)(x_2-X_2)\cdots(x_n-X_n)
  ;
  $$
  because of translation semi-invariance we may use the modified moment-cumulant formula
  \begin{align*}
  \kappa_n(X_1,X_2,\dots,X_n)
  &= (-1)^n   \kappa_n(x_1-X_1,x_2-X_2,\dots,x_n-X_n)\\
  &= (-1)^n \sum_{\pi\in\Pi_n} F_\pi^{[1,1,\dots,1]}(x_1,x_2,\dots,x_n)
  \end{align*}
  where $F_\pi^{[1,1,\dots,1]}(x_1,x_2,\dots,x_n) = \prod_{B\in\pi} F_B^{[1,1,\dots,1]}(x_i:i\in B)$.
  Writing this out in terms of the recursion
  \eqref{eq:CumulantShuffles:MultivariateIteratedRecursion}
  and noting that the result does not depend on the choice of the bounds $x_i$
  yields the claimed formula.
\end{proof}

\begin{proof}[Second Proof of Theorem~\ref{thm:CumulantShuffles:univariate}]
  If $X_1=X_2=\dotsm =X_n$
  then the integrand
  $$
  \sum_{\pi\in\Pi_n} F_\pi(t_1,t_2,\dots,t_n)\,\mu(\pi,\hat{1}_n)
  $$
  is symmetric in $t_1,t_2,\dots,t_n$, we may ``shuffle'' the integration 
  variables and get
  $$
  \kappa_n(X_1,X_2,\dots,X_n) = (-1)^n n!
  \inint_{-\infty<t_1<t_2<\dots<t_n<\infty}
  \sum_{\pi\in\Pi_n} F_\pi(t_1,t_2,\dots,t_n)\,\mu(\pi,\hat{1}_n) \,dt_1\,dt_2\dots dt_n
  $$
  and now observing that the joint distribution function of any subset
  satisfies
  $$
  F_I(t_i:i\in I)=F(\min(t_i:i\in I))
  $$
  we arrive at the claimed formula.
\end{proof}
\providecommand{\bysame}{\leavevmode\hbox to3em{\hrulefill}\thinspace}
\providecommand{\MR}{\relax\ifhmode\unskip\space\fi MR }
\providecommand{\MRhref}[2]{%
  \href{http://www.ams.org/mathscinet-getitem?mr=#1}{#2}
}
\providecommand{\href}[2]{#2}

\end{document}